\newtheorem{theorem}{Theorem}[section]
\newtheorem{lemma}[theorem]{Lemma}
\newtheorem{claim}[theorem]{Claim}
\theoremstyle{definition}
\numberwithin{equation}{section}
\newcommand{\R}{\mathbb{R}}
\newcommand{\N}{\mathbb{N}}
\newcommand{\C}{\mathbb{C}}
\newcommand{\D}{\mathbb{D}}
\newcommand{\T}{\mathbb{T}}
\renewcommand{\geq}{\geqslant}
\renewcommand{\leq}{\leqslant}
\renewcommand{\epsilon}{\varepsilon}
\setlist[enumerate]{leftmargin=20pt,itemsep=0pt,topsep=0pt}
\setlist[enumerate,1]{label=\emph{(\roman*)},ref={(\roman*)}}
\title[Boundary dynamics of iterated function systems]{A note on the boundary dynamics of holomorphic iterated function systems}
\author{Argyrios Christodoulou}
\address{Department of Mathematics, Aristotle University of Thessaloniki, 54124, Thessaloniki, Greece}
\email{argyriac@math.auth.gr}
\subjclass[2020]{Primary: 37F44, 30D05; Secondary: 47B33}
\keywords{holomorphic function; iterated function system; boundary dynamics}
\begin{document}

\maketitle

\begin{abstract}
We consider the boundary dynamics of iterated function systems of holomorphic self-maps of the unit disc. Our main result provides a sufficient condition which guarantees that the dynamical behaviour of a left iterated function system in the interior of the unit disc can be extended to the boundary. This generalises an extension of the classical Denjoy--Wolff theorem, due to Bourdon, Matache and Shapiro, to the setting of iterated function systems. To do so, we modify estimates for the Hardy norm of composition operators and combine them with a new technique of perturbing a left iterated function system by elliptic M\"obius transformations.
\end{abstract}

\section{Introduction}
In this article we investigate the boundary dynamics of a sequence of left compositions of functions holomorphic in the unit disc $\D$ of the complex plane.

Like many topics in complex dynamic, the starting point of our analysis is the Denjoy--Wolff theorem, which states that if a holomorphic function $f\colon \D \to\D$ is not conjugate to a Euclidean rotation, then its sequence of iterates $(f^n)$ converges locally uniformly to a constant function $z_0\in\overline{\D}$.

This classical result was strengthened by Boundon, Matache and Shapiro \cite{BoMaSh2005}, and independently by Poggi-Corradini \cite{Po2010}, who showed that in many cases the sequence $(f^n(\zeta))$ converges to $z_0$, for Lebesgue almost all $\zeta$ in the unit circle $\T\vcentcolon=\partial\D$ (see also \cite{DoMa1991}). Here, $f(\zeta)$ is defined in terms of radial limits
\[
f(\zeta)\vcentcolon=\lim_{r\to1^-}f(r\zeta),
\]
which, due to a theorem of Fatou, exist for Lebesgue almost all $\zeta\in\T$.

Recently, there have been many efforts towards generalising the Denjoy--Wolff theorem to include compositions of more than one map \cite{AbSh2025, BEFRS1, BEFRS2, Fe2023, FeNi2025}. Of particular interest are \emph{left iterated function systems}, which are compositions of the form
\[
F_n=f_n\circ f_{n-1} \circ\cdots \circ f_1,
\]
where each $f_i\colon\D\to\D$ is holomorphic. Although there are several sufficient conditions on $f_n$ that guarantee the convergence of the left iterated function system (see, for example, \cite{AbCh2022, AbSh2025, ChSh2019} and \cite[Sections 3.6, 3.7]{Ab2023}), there is no definitive analogue of the Denjoy--Wolff theorem for $F_n$.

Our goal in this article is to bring together the above two approaches of expanding the scope of the Denjoy--Wolff theorem. The theorem of Fatou we mentioned earlier implies that the sequence $F_n(\zeta)$ is well-defined, in terms of radial limits, for Lebesgue almost all $\zeta\in\T$ and all $n\in\N$. Therefore, it is only natural to examine whether the dynamical behaviour of a left iterated function system in $\D$ extends to the boundary. 

Our main result is the following, where $\ell$ denotes the Lebesgue probability measure in $\T$.

\begin{theorem}\label{mainprop}
Let $(f_n)$ be a sequence of holomorphic self-maps of $\D$ such that the left iterated function system $F_n=f_n\circ f_{n-1}\circ\cdots\circ f_1$ converges locally uniformly to a constant $z_0\in\D$. Assume that there exists a subsequence $(f_{n_k})$ of $(f_n)$ and Lebesgue measurable sets $E_k\subseteq\T$ with $\limsup_k \ell(E_k)>0$, so that $\lvert f_{n_k}(\zeta) \rvert<1$ for all $\zeta\in E_k$. Then the sequence $(F_n(\zeta))$ converges to $z_0$ for Lebesgue almost every $\zeta\in\T$.
\end{theorem}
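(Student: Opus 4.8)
The plan is to reduce the statement to a purely boundary fact --- that almost every point of $\T$ is eventually \emph{absorbed into the interior} by the maps $F_n$ --- and then to extract that absorption from the hypothesis on the subsequence. \emph{Reduction to boundary absorption.} Write $F_n^*$ for the radial boundary function of $F_n$. If $F_m^*(\zeta)\in\D$ then, since $f_{m+1}$ is continuous on $\D$ and maps $\D$ into $\D$, the radial limit of $F_{m+1}=f_{m+1}\circ F_m$ at $\zeta$ equals $f_{m+1}(F_m^*(\zeta))\in\D$; hence the sets $\{\zeta:|F_m^*(\zeta)|<1\}$ increase with $m$, and I set $A=\bigcup_m\{|F_m^*|<1\}$. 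It suffices to prove $\ell(A)=1$: given $\zeta\in A$ with $w:=F_m^*(\zeta)\in\D$, write $F_n=T_{m,n}\circ F_m$ with $T_{m,n}=f_n\circ\cdots\circ f_{m+1}$; the maps $T_{m,n}$ ($n\ge m$) are locally bounded and converge to $z_0$ on the non-empty open set $F_m(\D)$, so by the Vitali--Porter theorem $T_{m,n}\to z_0$ locally uniformly on $\D$, whence $F_n^*(\zeta)=\lim_{r\to1^-}T_{m,n}(F_m(r\zeta))=T_{m,n}(w)\to z_0$. (If some $F_m$ is constant the conclusion is immediate.)

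\emph{A monotone quantity and its increments.} Put $\sigma_n=\ell(\{|F_n^*|<1\})$, non-decreasing by the above, so the goal is $\sigma_n\to1$. Let $\mu_n$ be the push-forward of $\ell$ by $F_n^*$ restricted to $\{|F_n^*|=1\}$, a positive measure on $\T$ with $\mu_n(\T)=1-\sigma_n$. For almost every $\zeta$ with $F_{n-1}^*(\zeta)=\eta\in\T$, the radial limit of $F_n=f_n\circ F_{n-1}$ at $\zeta$ is the boundary value of $f_n$ at $\eta$ --- a Lindel\"of-type identification requiring some care --- which yields
\[
\sigma_n-\sigma_{n-1}=\mu_{n-1}\bigl(\{\zeta\in\T:|f_n^*(\zeta)|<1\}\bigr).
\]
Since $(\sigma_n)$ is bounded and non-decreasing and $n_k$ is strictly increasing, $\sum_k(\sigma_{n_k}-\sigma_{n_k-1})\le\sup_n\sigma_n<\infty$, and as $E_k\subseteq\{|f_{n_k}^*|<1\}$ we obtain $\mu_{n_k-1}(E_k)\le\sigma_{n_k}-\sigma_{n_k-1}\to0$. (When $f_n\equiv f$ this reduces to the assertion that a non-inner $f$ has $f^n\to z_0$ a.e., the theorem of \cite{BoMaSh2005} which serves as the base case.)

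\emph{The crux.} Suppose $\sigma_n\uparrow1-\delta$ with $\delta>0$. Then $\mu_n(\T)\ge\delta$ for all $n$ while $\mu_{n_k-1}(E_k)\to0$ and $\limsup_k\ell(E_k)>0$. If the $\mu_n$ were uniformly absolutely continuous with densities bounded below --- as they are, via the Poisson kernel, when $F_n$ is inner with $|F_n(0)|$ bounded away from $1$ (automatic here, since $F_n(0)\to z_0\in\D$) --- this would be an immediate contradiction. The real difficulty is that $\mu_n$ may be an atomless singular measure carried by a null set disjoint from the $E_k$. This is resolved by perturbation: replace each $f_{n_k}$ by $f_{n_k}\circ m_k$ for elliptic M\"obius transformations $m_k$ chosen, via the concentration of harmonic measure at points close to $\T$, so that the escape set $m_k^{-1}\{|f_{n_k}^*|<1\}\supseteq m_k^{-1}(E_k)$ has Lebesgue measure near $1$; as the $m_k$ are disc automorphisms, the perturbed left iterated function system still converges locally uniformly to $z_0$ and still satisfies the hypothesis, and the increment identity then forces $\sigma_n\to1$ for it. Modified versions of the classical estimates for the Hardy norm of composition operators are used to control, uniformly in $n$, how composing with the $f_n$ and the $m_k$ redistributes boundary mass, and hence both to run this comparison and to transfer the conclusion back to the original system. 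This last step --- the perturbation argument together with the transfer, carried out while preserving both the interior convergence $F_n\to z_0$ and the subsequence hypothesis --- is the main obstacle; the almost-everywhere composition-of-radial-limits identity used in the previous paragraph is a secondary technical point.
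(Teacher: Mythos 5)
Your reduction and bookkeeping are sound: it does suffice to show that almost every $\zeta\in\T$ is eventually absorbed into $\D$ (the Vitali--Porter argument for the tails is correct), the increment identity $\sigma_n-\sigma_{n-1}=\mu_{n-1}(\{\lvert f_n^*\rvert<1\})$ can indeed be justified (Lindel\"of's theorem plus the standard fact that, within $\{\lvert F_{n-1}^*\rvert=1\}$, preimages of Lebesgue-null sets under $F_{n-1}^*$ are null --- which in fact shows $\mu_n\ll\ell$, so your worry about singular $\mu_n$ is misplaced, though mere absolute continuity does not help), and the telescoping argument giving $\mu_{n_k-1}(E_k)\to0$ is fine. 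The genuine gap is exactly where you place it: the crux is not proved. The perturbation you propose --- replacing $f_{n_k}$ by $f_{n_k}\circ m_k$ with automorphisms $m_k$ chosen so that $m_k^{-1}(E_k)$ has measure near $1$ --- actually changes the iterated function system, and your assertion that the perturbed system ``still converges locally uniformly to $z_0$'' is unjustified and in general false: to inflate $\ell(E_k)$ the $m_k$ must move $0$ far towards $\T$, so at each index $n_k$ the orbit is kicked towards the boundary, and since the locally uniform convergence of the tails $f_n\circ\cdots\circ f_{n_0}\to z_0$ is not uniform in $n_0$, infinitely many such insertions can destroy convergence. Moreover, even granting convergence and $\sigma_n\to1$ for the perturbed system, no mechanism is offered for transferring that conclusion back to $F_n$, whose boundary functions are different; you yourself label this ``the main obstacle,'' so the proposal stops short of a proof precisely at the step the theorem requires.

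For comparison, the paper's perturbation never alters the system: it groups the maps into blocks $\phi_k=f_{n_{k+1}-1}\circ\cdots\circ f_{n_k}$ and inserts $e_k\circ e_k=\mathrm{id}$ for self-inverse elliptic automorphisms $e_k$ chosen so that each renormalized block $\widetilde\phi_k=e_k\circ\phi_k\circ e_{k-1}$ fixes $0$, while $F_m$ is literally unchanged. Because the fixed points $w_k$ of the $e_k$ stay in $D(0,\tfrac12)$, a Doering--Ma\~n\'e/Poisson-kernel comparison keeps the escape sets' measure bounded below, $\ell(e_{k-1}(E_k))\geq c/12$, and a uniform Nevanlinna counting function estimate then gives $\lVert C_{\widetilde\phi_k}\arrowvert_{H_0^2}\rVert\leq\nu<1$ for all $k$. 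Summability of $\lVert g_m\circ F_m\rVert_2^2\lesssim\nu^{2k}$ yields $g_m\circ F_m(\zeta)\to0$ almost everywhere directly, with no need for the $\sigma_n$ accounting. If you want to salvage your scheme, the missing ingredient is precisely such a uniform quantitative contraction statement for the renormalized blocks; choosing automorphisms that cancel in pairs, rather than genuinely perturbing the $f_{n_k}$, is what makes both the convergence hypothesis and the transfer back to $F_n$ non-issues.
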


An analysis of the boundary dynamics of holomorphic iterated function systems, with remarkable applications to the theory of transcendental dynamics, was carried out in \cite{BEFRS2} by Benini et al., albeit from a different perspective. To be more precise, \cite[Theorem B]{BEFRS2} provides a sufficient condition for the sequences $(F_n(\zeta))$ and $(F_n(z))$, for $\zeta\in\T$ and $z\in\D$, to exhibit the same asymptotic behaviour. This condition, however, is only substantial when $F_n(z)$ ``escapes" to the boundary $\T$; in particular it fails if one assumes that $F_n$ converges to $z_0\in\D$, as we do in Theorem \ref{mainprop}. Therefore, Theorem \ref{mainprop} can be thought of as an ``interior" (or ``elliptic") companion to the theorem of Benini et al., since together they shed light into the elusive boundary dynamics of left iterated function systems.

Let us now discuss the assumptions of Theorem \ref{mainprop}. Observe that if all the functions $f_n$ in Theorem \ref{mainprop} are \emph{inner}, i.e. $\lvert f_n(\zeta)\rvert =1$ for Lebesgue almost all $\zeta\in\T$, then the sequence $F_n(\zeta)$ lies on $\T$ for all $n$, and so the conclusion of our theorem fails trivially. Similarly, one can easily construct examples which show that our conclusion also fails if the sequence $(f_n)$ contains only finitely many non-inner functions. 

So, it is necessary to assume that $(f_n)$ contains a non-inner subsequence $(f_{n_k})$, as we do in Theorem \ref{mainprop}. Note that $(f_n)$ is still allowed to contain infinitely many inner functions. In Theorem \ref{mainprop}, however, we also assume that the Lebesgue measures of the sets which render $(f_{n_k})$ non-inner do not vanish, as is evident by the positivity of the limit superior in our assumptions. This is to ensure that there is a ``residual" target set which allows the sequence $(F_n(\zeta))$ to transition to the interior of $\D$, where the contracting properties of $f_n$ yield the desired convergence. We expect that an assumption similar to the one proposed in our main result is indeed necessary if one wishes to obtain a similar conclusion.

The techniques we employ, which are inspired by \cite{BoMaSh2005}, involve modifications of classical results for composition operators on Hardy space that appeared in \cite{Sh2000}, in conjunction with a new perturbation technique for iterated function systems. To be more precise, the key idea of our proof is to ``interweave" elliptic M\"obius transformations in-between the compositions of a left iterated function system which render certain Hardy norm estimates valid. 

\section{Preliminaries}

\subsection{Hardy spaces}
We denote by $H^2$ the usual Hardy space consisting of functions $ f(z)=\sum_{n=0}^{\infty}\widehat{f}_nz^n$ holomorphic in $\D$, for which $\sum_{n=0}^{\infty}\rvert \widehat{f}_n\lvert^2<+\infty$. The norm in $H^2$ is defined as
\begin{equation}\label{eq: hardy norm}
\lVert f \rVert_2\vcentcolon=\lim_{r\to1^-}\left(\frac{1}{2\pi}\int_{\T} \lvert f(r\zeta) \rvert^2 d\ell(\zeta)\right)^{1/2}<+\infty,
\end{equation}
where $\ell$ is the Lebesgue probability measure in the unit circle $\T$.

It is known \cite[Theorem 2.2]{Du1970} that for functions $f\in H^2$, the radial limits $f(\zeta)\vcentcolon=\lim_{r\to1}f(r\zeta)$ exist for Lebesgue almost every $\zeta\in\T$. In fact, the norm in $H^2$ can be obtained by the boundary integral
\begin{equation}\label{eq: hardy norm integral form}
\lVert f \rVert_2=\left(\frac{1}{2\pi}\int_{\T} \lvert f(\zeta) \rvert^2d\ell(\zeta)\right)^{1/2}.
\end{equation}
Another way of computing this norm involving an area integral is given by the Littlewood--Paley Identity \cite[p. 178]{Sh1993}
\begin{equation}\label{L-P}
\lVert f \rVert_2^2 = \lvert f(0) \rvert^2 +2\int_{\D} \lvert f'(z) \rvert^2 \log \frac{1}{\lvert z\rvert} dA(z),
\end{equation}
where $A$ is the Lebesgue probability measure in $\D$.

\subsection{Composition operators}
For a holomorphic function $f\colon\D\to\D$, we define the linear \emph{composition operator} $C_f\colon H^2\to H^2$ with $C_f(g)=g\circ f$.

Littlewodd's Subordination Principle \cite[Theorem 1.7]{Du1970} shows that this operator is well-defined and bounded. In particular, we have $\lVert C_f \rVert \leq \sqrt{\frac{1+\lvert f(0) \rvert}{1-\lvert f(0) \rvert}}$ (see \cite[p. 16]{Sh1993}). Note that in the special case where $f(0)=0$ this inequality becomes 
\begin{equation}\label{eq: littlewood norm}
\lVert C_f \rVert \leq 1.
\end{equation}

At this point we need to make a remark about the notation of norms. The notation $\lVert C_f \rVert$ used above denotes the operator norm induced by the $H^2$ norm. We shall follow this convention throughout the article; i.e. $\lVert \cdot \rVert_2$ will denote the norm of a function in $H^2$, whereas $\lVert \cdot \rVert$ will denote the operator norm of a composition operator.

The main tool for our proof of Theorem \ref{mainprop} is evaluating the norms of composition operators in $H^2$. In order to do so, we require the so-called ``Change of variables formula" that appears in \cite[p. 179]{Sh1993}. To present this, let us first define the Nevanlinna counting function of a holomorphic function $f\colon\D\to\D$, as 
\[
N_f(w)=\begin{cases}\displaystyle \sum_{z\in f^{-1}(\{w\})}\log \frac{1}{\lvert z \rvert}, \quad \text{for all}, & w\in f(\D)\setminus\{f(0)\},\\
					0, &w\notin f(\D)\setminus\{f(0)\}. \end{cases}
\]
where points in the preimage $f^{-1}(\{w\})$ are repeated according to their multiplicity. The change of variables formula we promised is the following:
\begin{equation}\label{change_var}
\lVert C_f(g) \rVert_2^2 = \lvert g(f(0)) \rvert^2 + 2 \int_{\D}\lvert g'(w)\rvert^2 N_f(w) dA(w).
\end{equation}

We now present some properties of the Nevanlinna counting function that will be important in our proofs. First, an application of Jensen's formula yields an integral interpretation of the Nevanlinna counting function, as follows (see \cite[p. 187]{Sh1993}):
\[
N_f(w) = \lim_{r\to1^-}\int_{\T}\log\left\lvert \frac{w-f(r\zeta)}{1-\overline{w}f(r\zeta)}\right\rvert d\ell(\zeta) + \log\left\lvert\frac{1-\overline{w}f(0)}{w-f(0)}\right\rvert.
\]
Since functions in $H^2$ have radial limits almost everywhere in $\T$, we can use Fatou's Lemma to obtain that whenever $f\in H^2$, we have
\begin{equation}\label{fatoulemma}
N_f(w) \leq \int_{\T}\log\left\lvert \frac{w-f(\zeta)}{1-\overline{w}f(\zeta)}\right\rvert d\ell(\zeta) + \log\left\lvert\frac{1-\overline{w}f(0)}{w-f(0)}\right\rvert.
\end{equation}
Another estimate of the Nevanlinna counting function is the famous Littlewood's Inequality \cite[p.187]{Sh1993}, which states that for any $f\colon \D\to\D$ holomorphic and any $w\in\D\setminus \{f(0)\}$, we have
\begin{equation}\label{eq: littlewood ineq}
N_f(w)\leq \log\left\lvert \frac{ 1-\overline{w}f(0)}{w-f(0)}\right\rvert. 
\end{equation}
In fact, \eqref{fatoulemma} is simply an intermediate step in the proof of Littlewood's Inequality in \cite[p.187]{Sh1993}.

\subsection{Hyperbolic geometry} We finish the preliminary material by presenting some basic facts for the hyperbolic geometry of the unit disc. For a complete treatise on hyperbolic geometry, we refer to \cite[Chapter 1]{Ab2023}.

To start with, the \emph{hyperbolic metric} $d$ of $\D$ is defined as 
\[
d(z,w)=\frac{1}{2}\log \frac{1+\left\lvert\frac{z-w}{1-\overline{w}z}\right\rvert}{1-\left\lvert\frac{z-w}{1-\overline{w}z}\right\rvert}, \quad \text{ for all}\quad z,w\in\D.
\]
It is easy to see that $d$ is indeed a metric, which turns $\D$ into a complete metric space. The geodesics of this metric space are arcs of circles orthogonal to the unit circe $\T$, and the space in uniquely geodesic; i.e. any two points $z,w\in\D$ can be joined by a unique geodesic of $(\D,d)$. 

Moreover, if $\gamma\colon[0,1]\to\D$ is a geodesic joining $z,w\in\D$, then there exists a unique point $a\in\gamma([0,1])$ so that 
$d(z,a)=d(w,a)=d(z,w)/2$. We call this point $a$ the \emph{hyperbolic midpoint of $z$ and $w$}.

The main advantage of working with the hyperbolic metric of $\D$, instead of the Euclidean, is the Schwarz--Pick Lemma, which can be stated as follows:\\
If $f\colon\D\to\D$ is holomorphic, then 
\begin{equation}\label{eq: schwarz-pick}
d(f(z),f(w))\leq d(z,w),\quad\text{for all}\quad z,w\in \D.
\end{equation}
Equality in \eqref{eq: schwarz-pick} is achieved for some (and hence any) $z\neq w$ if and only if 
\[
f(z)=e^{i\theta}\frac{z-a}{1-\overline{a}z}, \quad \text{for some}\ \theta\in\R \ \text{and}\ a\in\D.
\]
The M\"obius transformations of this form will be called \emph{automorphisms of $\D$}, and are exactly the orientation preserving isometries of $d$.

\section{Proof of the main result}
This section is dedicated to proving Theorem~\ref{mainprop}. For the convenience of the reader, let us recall the assumptions of Theorem~\ref{mainprop}. Suppose $(f_n)$ is a sequence of holomorphic self-maps of $\D$ such that the left iterated function system $F_n=f_n\circ f_{n-1}\circ \cdots\circ f_1$ converges locally uniformly to a constant $z_0\in\D$. We assume that there exists a subsequence $(f_{n_k})\subseteq (f_n)$ and $\ell$-measurable sets $E_k\subset \T$ such that $\lvert f_{n_k}(\zeta)\rvert<1$ for all $\zeta\in E_k$ and $\limsup_k \ell(E_k)>0$. 

Our goal is to show that $F_n(\zeta)\to z_0$, as $n\to+\infty$, for $\ell$-almost all $\zeta\in\T$. Observe that if $f_k$ is constant, for some $k\in\N$, then the result is trivial due to the convergence of $(F_n)$ to $z_0$. So, for the rest of the proof, we assume that all $f_n$ are non-constant self-maps of $\D$. Also, consider the automorphism 
\[
\psi_0(z)=\frac{z-z_0}{1-\overline{z_0}z},
\]
and the functions $g_n=\psi_0 \circ f_n\circ \psi_0^{-1}$. We can see that $(F_n(\zeta))$ converges to $z_0$ if and only if the sequence $G_n(\zeta)=g_n\circ\cdots\circ g_1(\zeta)=\psi_0\circ F_n\circ \psi_0^{-1}(\zeta)$ converges to 0. Therefore, it suffices to assume that $z_0=0$.

Throughout the proof we use $D(z,r)$ to denote the Euclidean disc of radius $r>0$, centred at $z\in\C$. Before we proceed with the main body of the proof, we require the following result from \cite[Theorem 3.3]{ChSh2019} (see, also, \cite[Theorem 4.1 (ii)]{AbCh2022}).

\begin{theorem}\label{thm: stability}
Let $f$ be a holomorphic self-map of $\D$ with $f(z_0)=z_0$, for some $z_0\in\D$, which is not an automorphism of $\D$. If $(f_n)$ is a sequence of holomorphic self-maps of $\D$ that converges locally uniformly to $f$, then the left iterated function system $F_n=f_n\circ f_{n-1}\circ\cdots \circ f_1$ converges locally uniformly to the constant $z_0$.
\end{theorem}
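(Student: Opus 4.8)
The plan is to show that any holomorphic self-map $f$ of $\D$ fixing $z_0\in\D$ and not an automorphism is a strict contraction in the hyperbolic metric on every hyperbolic ball centred at $z_0$, and then to combine this with the Schwarz--Pick Lemma applied to the tail maps $f_n$. After conjugating by an automorphism we may assume $z_0=0$, so $f(0)=0$ and $f$ is not a rotation. By the Schwarz lemma $|f(z)|\le|z|$ for all $z\in\D$, with equality at some nonzero point only if $f$ is a rotation; hence $|f(z)|<|z|$ for every $z\in\D\setminus\{0\}$. First I would upgrade this to a uniform estimate on compact sets: for each $R\in(0,1)$ there is a constant $c(R)\in(0,1)$ such that $d(f(z),0)=d(f(z),f(0))\le c(R)\,d(z,0)$ for all $z$ with $d(z,0)\le R$. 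This follows from the equality case of Schwarz--Pick together with a compactness/normal-families argument (or directly from the Schwarz--Pick estimate for the derivative, since $f$ not an automorphism forces $|f'(0)|<1$ and one gets quantitative decay of the hyperbolic distance to the fixed point).

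Next I would bring in the sequence $(f_n)$. Fix a point $z\in\D$ and a closed hyperbolic ball $\overline{B}=\{w:d(w,0)\le R\}$ containing $z$ in its interior; since $f(0)=0$ and $f_n\to f$ locally uniformly, for all large $n$ we have $f_n(\overline{B})\subseteq \overline{B}$, so the orbit $F_n(z)$ eventually stays inside $\overline{B}$. Write $w_n=F_n(z)$. The key inequality is a comparison: for $n$ large,
\[
d(F_{n}(z),0)\le d\bigl(f_n(w_{n-1}),f(w_{n-1})\bigr)+d\bigl(f(w_{n-1}),0\bigr)\le d\bigl(f_n(w_{n-1}),f(w_{n-1})\bigr)+c(R)\,d(w_{n-1},0),
\]
using the triangle inequality, $f(0)=0$, and the contraction estimate above. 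Since $f_n\to f$ uniformly on $\overline{B}$ in the Euclidean sense, and the hyperbolic metric is locally comparable to the Euclidean metric on $\overline{B}$, the error term $\varepsilon_n\vcentcolon=\sup_{w\in\overline{B}}d(f_n(w),f(w))$ tends to $0$. Thus $d(w_n,0)\le c(R)\,d(w_{n-1},0)+\varepsilon_n$ for all large $n$, and a routine lemma on such recursions (with $c(R)<1$ fixed and $\varepsilon_n\to0$) gives $d(w_n,0)\to0$, i.e. $F_n(z)\to 0$.

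Finally I would promote pointwise convergence to local uniform convergence. This is standard: the family $(F_n)$ is uniformly bounded (it maps $\D$ into $\D$), hence normal by Montel's theorem, and any locally uniform limit of a subsequence must agree with the pointwise limit $0$; therefore $F_n\to 0$ locally uniformly. Alternatively one can run the recursion estimate uniformly over $z$ in a compact set, since the starting ball $\overline{B}$ can be chosen to contain that compact set. The main obstacle — and the only genuinely nontrivial step — is establishing the uniform hyperbolic contraction constant $c(R)<1$ on hyperbolic balls around the fixed point for a non-automorphism $f$; once that is in hand, everything else is the triangle inequality plus a recursion estimate. (I would note that this contraction statement is essentially the quantitative refinement of the classical fact that a non-elliptic-automorphism self-map with an interior fixed point is "hyperbolically contracting near the fixed point," and it is exactly what makes the left iterated function system inherit the dynamics of $f$.)
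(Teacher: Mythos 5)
Your overall strategy (normalise $z_0=0$, establish a uniform hyperbolic contraction factor $c(R)<1$ for $f$ on each closed hyperbolic ball $\overline{B}=\{d(\cdot,0)\le R\}$, feed the locally uniform convergence $f_n\to f$ into a recursion $d(F_n(z),0)\le c(R)\,d(F_{n-1}(z),0)+\varepsilon_n$, then upgrade pointwise to locally uniform convergence by normality) is sound and is in the spirit of the proofs in the sources the paper actually relies on: the paper does not prove Theorem~\ref{thm: stability} itself but imports it from \cite[Theorem 3.3]{ChSh2019} (see also \cite[Theorem 4.1(ii)]{AbCh2022}). The contraction claim itself is fine: writing $g(z)=f(z)/z$, the strict Schwarz inequality plus $|f'(0)|<1$ and compactness give $c(R)<1$.

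There is, however, one genuine gap. You assert: ``since $f(0)=0$ and $f_n\to f$ locally uniformly, for all large $n$ we have $f_n(\overline{B})\subseteq\overline{B}$, so the orbit $F_n(z)$ eventually stays inside $\overline{B}$.'' The second half does not follow from the first. The invariance $f_n(\overline{B})\subseteq\overline{B}$ holds only for $n\ge N$, where $N$ depends on $\overline{B}$, and by the time this kicks in the finitely many earlier maps $f_1,\dots,f_{N-1}$ (which are arbitrary self-maps of $\D$) may have pushed the orbit far outside $\overline{B}$; invariance of $\overline{B}$ under the later maps keeps you in $\overline{B}$ only if you are already there, and nothing in your argument forces the orbit to re-enter $\overline{B}$. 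Since the key inequality $d(f(w_{n-1}),0)\le c(R)\,d(w_{n-1},0)$ is only available while $w_{n-1}\in\overline{B}$, the recursion is not yet justified. The gap is fixable: let $N$ be as above, set $M=d(F_{N-1}(z),0)$, and compare the true orbit with the auxiliary orbit $q_{N-1}=0$, $q_n=f_n(q_{n-1})\in\overline{B}$ for $n\ge N$; by the Schwarz--Pick Lemma $d(F_n(z),q_n)\le M$, so $F_n(z)$ lies in the fixed ball $\overline{B}'=\{d(\cdot,0)\le R+M\}$ for all $n\ge N-1$. Now run your contraction-plus-error recursion on $\overline{B}'$ (with $c(R+M)<1$ and $\varepsilon_n=\sup_{\overline{B}'}d(f_n,f)\to0$, valid for $n$ beyond some $N'\ge N$), which gives $F_n(z)\to0$; the same repair is needed for your ``uniform over a compact set'' variant at the end. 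With this adjustment the proof is correct.
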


The main idea for our proof is to find an appropriate normalisation for the compositions $F_n$ that will allow us to employ estimates on certain Hardy norms. Such a normalisation can be obtained as follows:

By passing to a subsequence of $(f_{n_k})$, we can assume that there exists $c\in(0,1)$ such that $\ell(E_k)> c$, for all $k\in\N$. Moreover, for every $\epsilon>0$, and all $k\in\N$, there exists a Lebesgue measurable set $E_k'\subseteq E_k$ and a constant $\delta_{k,\epsilon}\in(0,1)$ so that $\lvert f_{n_k}(\zeta) \rvert<\delta_{k,\epsilon}$, for every $\zeta\in E_k'$, and $\ell(E_k')\geq \ell(E_k)-\epsilon$. So, without loss of generality, we also assume that for every $k\in\N$, there exists $\delta_k\in(0,1)$, so that 
\begin{equation}\label{dk}
\lvert f_{n_k}(\zeta) \rvert<\delta_k,\quad  \text{for every} \quad \zeta\in E_k
\end{equation}
and $\ell(E_k)> c$, for some $c\in(0,1)$.

Next, we define the compositions $\phi_k=f_{n_{k+1}-1}\circ\cdots\circ f_{n_k}$ for $k\geq 1$. Then, if $m\in\N$ and $k$ is the largest integer such that $m>n_{k+1}$, we have that 
\[
F_m= f_m\circ\cdots\circ f_{n_{k+1}}\circ \phi_k\circ\phi_{k-1}\circ\cdots\circ \phi_1\circ f_{n_1-1}\circ\cdots \circ f_1, \quad \text{for all}\ m \ \text{large enough}.
\]

We would first like to examine the convergence properties of $(\phi_k)$. Since $(F_n)$ converges locally uniformly to 0 and all $f_n$ are non-constant (as assumed in the beginning of this section), for any fixed $n_0\in\N$, the sequence $(f_n\circ\cdots\circ f_{n_0})$ converges to 0, pointwise on the open set $f_{n_0-1}\circ\cdots \circ f_1\left(D(0,\tfrac{1}{2})\right)$. Then, the Vitali--Porter Theorem \cite[p.75]{schiff} implies that $(f_n\circ\cdots\circ f_{n_0})$ converges to 0, locally uniformly as $n\to+\infty$. So, by passing to a further subsequence if necessary, we can assume that the integers $n_k$ are sparse enough, so that for all $k\in\N$, the following hold:
\begin{align}
&\phi_k\left(\overline{D\left(0,\frac{1}{2}\right)}\right)\subset D\left(0,\frac{1}{2}\right),\label{assumption 1} \\
&\phi_k(z)\xrightarrow{k\to\infty}0,\quad\text{for all}\ z\in\D,\quad  \text{and}\label{assumption 2}\\
&\lvert \phi_k(\zeta) \rvert<\frac{1}{2}, \quad \text{for all} \ \zeta\in E_k\label{assumption 3}.
\end{align}
This last inequality, \eqref{assumption 3}, is possible due to \eqref{dk}.

The sequence $(\phi_k)$ is the key ingredient of our normalisation scheme. However, our work is not done, as we would like each $\phi_k$ to fix the point 0, which might not happen in principle. 

To overcome this obstacle, we consider the automorphism $e_1(z)=\frac{\phi_1(0)-z}{1-\overline{\phi_1(0)}z}$ and recursively define the automorphism $e_k$ with
\[
e_k(z)=\frac{\phi_k(e_{k-1}(0))-z}{1-\overline{\phi_k(e_{k-1}(0))}z}, \quad\text{for}\quad k\geq2.
\]
Let us examine some properties of the automorphisms $e_k$. Firstly, observe that each $e_k$ is self-inverse. Moreover, $e_k$ interchanges the points 0 and $\phi_k(e_{k-1}(0))$ (or 0 and $\phi_1(0)$ for the case of $e_1$). Therefore, every $e_k$ has a fixed point $w_k\in\D$, which is the hyperbolic midpoint
of 0 and $\phi_k(e_{k-1}(0))$ (and analogously for $e_1$). Now, observe that 
\begin{equation}\label{eq: ek in disc}
e_k(0)=\phi_k(e_{k-1}(0))=\phi_k(\phi_{k-1}(e_{k-2}(0)))=\cdots=\phi_k\circ\cdots\circ \phi_1(0)\in D\left(0,\frac{1}{2}\right),
\end{equation}
due to \eqref{assumption 1}. Therefore, we have that
\begin{equation}\label{eq: fixed points of ek}
w_k\in D\left(0,\frac{1}{2}\right), \quad \text{for all}\ k\in\N.
\end{equation}

We can in fact say more about the sequence of points $e_k(0)=\phi_k\circ\cdots\circ\phi_1(0)$. Note that \eqref{assumption 1} and \eqref{assumption 2} imply that $(\phi_k)$ converges locally uniformly to the constant function 0 (again by the Vitali--Porter Theorem). So, we can use Theorem \ref{thm: stability} in order to deduce that the left iterated function system $\Phi_k=\phi_k\circ \phi_{k-1}\circ\cdots \circ \phi_1$ converges locally uniformly to 0. This means that
\begin{equation}\label{eq: ek converges to 0}
e_k(0)=\Phi_k(0)\xrightarrow{k\to+\infty}0.
\end{equation}

Next, we define the sequence of functions $\widetilde{\phi}_1=e_1\circ\phi_1$ and $\widetilde{\phi}_k=e_k\circ \phi_k\circ e_{k-1}$, for $k\geq 2$. Notice that $\widetilde{\phi}_k(0)=0$ for all $k\geq1$, as required for our technique. Also, since each $e_k$ is self-inverse, we can rewrite the composition $F_m$ as follows,
\begin{align}
F_m&=f_m\circ \cdots \circ f_{n_{k+1}}\circ e_k\circ e_k \circ \phi_k \circ e_{k-1}\circ e_{k-1}\circ \phi_{k-1}\circ\cdots \circ e_1\circ \phi_1\circ f_{n_1-1}\circ\cdots \circ f_1\nonumber\\
	&= f_m\circ\cdots \circ f_{n_{k+1}}\circ e_k\circ \widetilde{\phi}_k\circ \widetilde{\phi}_{k-1}\circ \cdots\circ \widetilde{\phi}_1\circ f_{n_1-1}\circ\cdots \circ f_1,\label{Fm}
\end{align}
which is valid for all $m$ large. 

Rewriting the left iterated function system $(F_n)$ in the form \eqref{Fm} will turn out to be exactly the normalisation needed for our purposes. So, with this endeavour completed, we move on to obtaining estimates on the norms of the composition operators induced by the functions $\widetilde{\phi_k}$.

Observe that \eqref{assumption 3} is equivalent to $\lvert \phi_k\circ e_{k-1}(\zeta) \rvert <\frac{1}{2}$, for all $\zeta\in e_{k-1}(E_k)$. Since each $e_k$ is an automorphism of $\D$ with fixed point $w_k\in D\left(0,\frac{1}{2}\right)$ (see \eqref{eq: fixed points of ek}), there exists a universal constant $r_0\in(0,1)$, so that $e_k\left(D\left(0,\frac{1}{2}\right)\right)\subset D(0,r_0)$, for all $k\in\N$. Therefore, 
\begin{equation}
\left\lvert\widetilde{\phi}_k(\zeta)\right\rvert<r_0,\quad \text{for all}\quad \zeta\in e_{k-1}(E_k)\quad \text{and every}\quad k\in\N.\label{innerset}
\end{equation}

Now, simple computations show that for all $\zeta\in\T$, we have
\begin{equation}\label{eq: derivative estimate}
\lvert e_k'(\zeta)\rvert = \frac{1-\lvert \phi_k(e_{k-1}(0))\rvert^2}{\left\lvert 1-\overline{\phi_k(e_{k-1}(0))}\zeta\right\rvert^2}\geq \frac{3}{16},
\end{equation}
where to obtain the inequality we used the triangle inequality and \eqref{eq: ek in disc}. Since each $e_k$ is a conformal automorphism of $\D$, \cite[Theorem 6.8]{Po1992} implies that 
\[
\ell(e_{k-1}(E_k))=\int_{E_k}\lvert e_k'(\zeta)\rvert d\ell(\zeta),
\]
which, when combined with \eqref{eq: derivative estimate} yields
\begin{equation}\label{measure}
\ell\left(e_{k-1}(E_k)\right)\geq \frac{3}{16}\ell(E_k)\geq \frac{3c}{16}. 
\end{equation}

Our first claim is the analogue of \cite[Theorem 3.2]{Sh2000} to our setting.

\begin{claim}\label{claim1}
There exist constants $r_1,\delta\in(0,1)$ so that for all $k\in\N$
\[
N_{\widetilde{\phi}_k}(w)\leq \delta\ \log\frac{1}{\lvert w \rvert}, \quad \text{for all} \quad r_1\leq \lvert w \rvert< 1.
\]
\end{claim}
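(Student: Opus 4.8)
The plan is to read the bound off the Fatou--Littlewood estimate \eqref{fatoulemma}, making quantitative use of three facts established just above the claim: that $\widetilde{\phi}_k(0)=0$ for every $k$; that $\lvert\widetilde{\phi}_k(\zeta)\rvert<r_0$ for $\zeta\in e_{k-1}(E_k)$, by \eqref{innerset}; and that $\ell\bigl(e_{k-1}(E_k)\bigr)\geq c/12$ for every $k$, by \eqref{measure}. The uniformity of the constants $r_1$ and $\delta$ in the statement is exactly what the uniformity (in $k$) of $r_0$ and of this measure lower bound will buy us.

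First I would specialise \eqref{fatoulemma} to $f=\widetilde{\phi}_k$ (a bounded holomorphic function, hence in $H^2$). As $\widetilde{\phi}_k(0)=0$, for every $w\in\D\setminus\{0\}$ it becomes
\[
N_{\widetilde{\phi}_k}(w)\leq\int_{\T}\log\left\lvert\frac{w-\widetilde{\phi}_k(\zeta)}{1-\overline{w}\,\widetilde{\phi}_k(\zeta)}\right\rvert\,d\ell(\zeta)+\log\frac{1}{\lvert w\rvert}.
\]
Since $\lvert\widetilde{\phi}_k(\zeta)\rvert\leq1$ almost everywhere, the pseudo-hyperbolic quantity inside the logarithm is at most $1$, so the integrand is non-positive on all of $\T$. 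On the subset $e_{k-1}(E_k)$ we have the sharper bound $\lvert\widetilde{\phi}_k(\zeta)\rvert<r_0$, and the elementary estimate for the pseudo-hyperbolic distance between $w$ and a point of $\overline{D(0,r_0)}$ gives
\[
\left\lvert\frac{w-\widetilde{\phi}_k(\zeta)}{1-\overline{w}\,\widetilde{\phi}_k(\zeta)}\right\rvert\leq\frac{\lvert w\rvert+r_0}{1+\lvert w\rvert r_0},\qquad\zeta\in e_{k-1}(E_k).
\]
Dropping the non-positive contribution of $\T\setminus e_{k-1}(E_k)$ and using $\ell\bigl(e_{k-1}(E_k)\bigr)\geq c/12$ together with $\log\frac{\lvert w\rvert+r_0}{1+\lvert w\rvert r_0}<0$, I obtain, for all $k\in\N$ and all $w\in\D\setminus\{0\}$,
\[
N_{\widetilde{\phi}_k}(w)\leq\frac{c}{12}\log\frac{\lvert w\rvert+r_0}{1+\lvert w\rvert r_0}+\log\frac{1}{\lvert w\rvert}.
\]

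It then remains to absorb the first term into a fixed fraction of $\log\frac{1}{\lvert w\rvert}$ once $\lvert w\rvert$ is close to $1$. Writing $t=\lvert w\rvert$, this reduces to the one-variable fact that
\[
\frac{\log\frac{t+r_0}{1+t r_0}}{\log t}\xrightarrow{\ t\to1^-\ }\frac{1-r_0}{1+r_0}>0,
\]
which follows from a first-order expansion at $t=1$ (or L'Hôpital). Hence one may fix $r_1\in(0,1)$ so close to $1$ that this ratio exceeds $\tfrac12\cdot\frac{1-r_0}{1+r_0}$ on $[r_1,1)$, and set $\delta=1-\frac{c}{24}\cdot\frac{1-r_0}{1+r_0}\in(0,1)$; then $\frac{c}{12}\log\frac{t+r_0}{1+t r_0}\leq(1-\delta)\log t$ for $t\in[r_1,1)$, and substituting back gives $N_{\widetilde{\phi}_k}(w)\leq\delta\log\frac{1}{\lvert w\rvert}$ for $r_1\leq\lvert w\rvert<1$, uniformly in $k$.

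The skeleton here is that of \cite[Theorem 3.2]{Sh2000}; the only genuinely delicate point — and the reason for the careful bookkeeping preceding the claim — is the uniformity in $k$ of the two parameters measuring how far the $\widetilde{\phi}_k$ are from being inner, namely the sup bound $r_0$ on $e_{k-1}(E_k)$ and the measure lower bound $c/12$ on those sets. Granting those, what is left is exactly the elementary estimate above, so I do not anticipate further obstacles.
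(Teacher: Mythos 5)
Your proposal is correct and follows essentially the same route as the paper: specialise \eqref{fatoulemma} using $\widetilde{\phi}_k(0)=0$, discard the non-positive contribution of $\T\setminus e_{k-1}(E_k)$, and feed in the uniform bounds $\lvert\widetilde{\phi}_k\rvert<r_0$ on $e_{k-1}(E_k)$ and $\ell(e_{k-1}(E_k))\geq c/12$ from \eqref{innerset} and \eqref{measure}. The only (harmless) difference is the pointwise step: you bound the pseudo-hyperbolic distance by $\frac{\lvert w\rvert+r_0}{1+\lvert w\rvert r_0}$ and conclude via a L'H\^opital comparison with $\log\frac{1}{\lvert w\rvert}$ near $\lvert w\rvert=1$, whereas the paper uses the identity for $1-\rho^2$ together with $1-x\leq\log\frac1x\leq 2(1-x)$; both yield the same $\delta=1-\frac{c(1-r_0)}{24(1+r_0)}$.
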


\begin{proof}[Proof of Claim~\ref{claim1}]
Let $w\in\D$, $k\in\N$ and $\zeta\in e_{k-1}(E_k)$. Recall that by inequality \eqref{innerset} we have that $\lvert\widetilde{\phi}_k(\zeta)\rvert <r_0$. So,
\begin{align}
1-\left\lvert \frac{w-\widetilde{\phi}_k(\zeta)}{1-\overline{w}\widetilde{\phi}_k(\zeta)} \right\rvert^2&=\frac{(1-\lvert w \rvert^2)(1-\lvert \widetilde{\phi}_k(\zeta) \rvert^2)}{\lvert 1- \overline{w}\widetilde{\phi}_k(\zeta)\rvert^2}\geq \frac{(1-\lvert w \rvert^2)(1-{r_0}^2)}{(1+\lvert\widetilde{\phi}_k(\zeta)\rvert)^2}\nonumber\\
	&\geq \frac{(1-\lvert w \rvert^2)(1-{r_0}^2)}{(1+r_0)^2}=\frac{(1-\lvert w \rvert^2)(1-{r_0})}{1+r_0}. \label{eq: claim1 eq1}
\end{align}
Moreover, $1-x\leq \log\frac{1}{x}$, for all $x\in(0,1)$, and there exists $r'\in(0,1)$ so that $\log\frac{1}{x}\leq 2(1-x)$, for all $x\in[r',1)$. Write $r_1\vcentcolon=\sqrt{r'}$. Then, using these inequalities and \eqref{eq: claim1 eq1}, we get that for all $r_1\leq\lvert w \rvert <1$,
\begin{align}
\log \left\lvert \frac{1-\overline{w}\widetilde{\phi}_k(\zeta)}{w-\widetilde{\phi}_k(\zeta)} \right\rvert^2&\geq 1-\left\lvert \frac{w-\widetilde{\phi}_k(\zeta)}{1-\overline{w}\widetilde{\phi}_k(\zeta)} \right\rvert^2\geq \frac{(1-\lvert w \rvert^2)(1-{r_0})}{1+r_0}\nonumber\\
	&\geq \frac{1-{r_0}}{2(1+r_0)}\log\frac{1}{\lvert w \rvert^2}=\frac{1-{r_0}}{1+r_0}\log\frac{1}{\lvert w \rvert}\label{claimineq}.
\end{align}
Observe that 
\[
\left\lvert \frac{w-\widetilde{\phi}_k(\zeta)}{1-\overline{w}\widetilde{\phi}_k(\zeta)} \right\rvert\leq 1,
\]
and so
\begin{equation}\label{eq: claim log negative}
\log\left\lvert \frac{w-\widetilde{\phi}_k(\zeta)}{1-\overline{w}\widetilde{\phi}_k(\zeta)} \right\rvert\leq 0, \quad\text{for all}\ \zeta\in e_{k-1}(E_k).
\end{equation}
Using inequality \eqref{eq: claim log negative} and the inequality \eqref{fatoulemma} for the Nevanlinna counting function, we obtain that for all $w\in\D$ and all $k\in\N$,
\[
N_{\widetilde{\phi}_k}(w)\leq \int_{\T}\log\left\lvert \frac{w-\widetilde{\phi}_k(\zeta)}{1-\overline{w}\widetilde{\phi}_k(\zeta)} \right\rvert d\ell(\zeta)+\log\frac{1}{\lvert w \rvert}\leq \int_{e_{k-1}(E_k)} \log\left\lvert \frac{w-\widetilde{\phi}_k(\zeta)}{1-\overline{w}\widetilde{\phi}_k(\zeta)} \right\rvert d\ell(\zeta)+\log\frac{1}{\lvert w \rvert}.
\]
Therefore, restricting to $r_1\leq \lvert w\rvert <1$ and using \eqref{claimineq} yields
\[
N_{\widetilde{\phi}_k}(w)\leq -\frac{1}{2}\frac{1-{r_0}}{1+r_0}\ell(e_{k-1}(E_k))\log\frac{1}{\lvert w \rvert} + \log\frac{1}{\lvert w \rvert} = \left(1-\frac{1}{2}\frac{1-{r_0}}{1+r_0}\ell(e_{k-1}(E_k))\right) \log\frac{1}{\lvert w \rvert}.
\]
Finally, using the estimate $\ell(e_{k-1}(E_k))\geq \frac{3c}{16}$ from \eqref{measure}, we obtain
\[
N_{\widetilde{\phi}_k}(w)\leq \left(1-\frac{3c(1-{r_0})}{32(1+r_0)}\right)\log\frac{1}{\lvert w \rvert},
\]
which is the desired conclusion for $\delta = 1-\frac{3c(1-{r_0})}{32(1+r_0)}\in (0,1)$.
\end{proof}

As promised, we are now ready to prove an upper bound for the norm of the composition operator $C_{\widetilde{\phi}_k}$, similar to \cite[Theorem 5.1]{Sh2000}. Let us denote by $H_0^2$ the linear subspace of $H^2$ consisting of all functions $f\in H^2$ with $f(0)=0$. Since $\widetilde{\phi}_k(0)=0$, the restriction $C_{\widetilde{\phi}_k}\arrowvert_{H_0^2}$ is a well-defined operator on $H_0^2$. 

\begin{claim}\label{claim2}
There exists a constant $\nu\in(0,1)$, such that $\lVert C_{\widetilde{\phi}_k}\arrowvert_{H_0^2}\rVert\leq \nu$, for all $k\in\N$.
\end{claim}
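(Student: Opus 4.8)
The plan is to convert the operator-norm bound into a weighted area estimate using the change-of-variables formula \eqref{change_var} together with the Littlewood--Paley identity \eqref{L-P}, then to invoke Claim~\ref{claim1} on the annulus $\{r_1\le\lvert w\rvert<1\}$ while controlling the complementary disc $D(0,r_1)$ using only the fact that $g'$ is holomorphic. Since $\widetilde{\phi}_k(0)=0$, for any $g\in H_0^2$ (so $g(0)=0$) the two identities give
\[
\bigl\lVert C_{\widetilde{\phi}_k}(g)\bigr\rVert_2^2 = 2\int_{\D}\lvert g'(w)\rvert^2 N_{\widetilde{\phi}_k}(w)\,dA(w),\qquad \lVert g\rVert_2^2 = 2\int_{\D}\lvert g'(w)\rvert^2\log\tfrac{1}{\lvert w\rvert}\,dA(w).
\]
Splitting the first integral at $\lvert w\rvert=r_1$, I would bound $N_{\widetilde{\phi}_k}(w)\le\delta\log\frac{1}{\lvert w\rvert}$ on $\{r_1\le\lvert w\rvert<1\}$ via Claim~\ref{claim1}, and $N_{\widetilde{\phi}_k}(w)\le\log\frac{1}{\lvert w\rvert}$ on $\{\lvert w\rvert<r_1\}$ via Littlewood's inequality \eqref{eq: littlewood ineq} (using again $\widetilde{\phi}_k(0)=0$). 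Writing $I=\int_{\lvert w\rvert<r_1}\lvert g'\rvert^2\log\frac{1}{\lvert w\rvert}\,dA$ and $J=\int_{r_1\le\lvert w\rvert<1}\lvert g'\rvert^2\log\frac{1}{\lvert w\rvert}\,dA$, this yields $\lVert C_{\widetilde{\phi}_k}(g)\rVert_2^2\le 2(I+\delta J)$ while $\lVert g\rVert_2^2=2(I+J)$.

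The crucial step, and essentially the only nontrivial one, is the inequality $I\le K\,J$ for a constant $K$ depending only on $r_1$ (hence uniform in $k$ and in $g$). This is where holomorphy is used: writing $g'(z)=\sum_{n\ge0}b_nz^n$, the circular mean $M(\rho):=\frac{1}{2\pi}\int_{\T}\lvert g'(\rho\zeta)\rvert^2\,d\ell(\zeta)=\sum_{n\ge0}\lvert b_n\rvert^2\rho^{2n}$ is non-decreasing on $(0,1)$, so passing to polar coordinates gives $I=2\int_0^{r_1}M(\rho)\rho\log\frac{1}{\rho}\,d\rho\le 2M(r_1)\int_0^{r_1}\rho\log\frac{1}{\rho}\,d\rho$ and $J=2\int_{r_1}^{1}M(\rho)\rho\log\frac{1}{\rho}\,d\rho\ge 2M(r_1)\int_{r_1}^{1}\rho\log\frac{1}{\rho}\,d\rho$; since both $\rho$-integrals are finite and strictly positive, $K:=\bigl(\int_0^{r_1}\rho\log\frac{1}{\rho}\,d\rho\bigr)\big/\bigl(\int_{r_1}^{1}\rho\log\frac{1}{\rho}\,d\rho\bigr)$ does the job.

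Combining the pieces, $I\le KJ$ forces $J\ge\frac{1}{1+K}(I+J)=\frac{1}{2(1+K)}\lVert g\rVert_2^2$, whence
\[
\bigl\lVert C_{\widetilde{\phi}_k}(g)\bigr\rVert_2^2\le 2(I+\delta J)=\lVert g\rVert_2^2-2(1-\delta)J\le\Bigl(1-\tfrac{1-\delta}{1+K}\Bigr)\lVert g\rVert_2^2 .
\]
This proves the claim with $\nu=\sqrt{1-\frac{1-\delta}{1+K}}\in(0,1)$, a constant independent of $k$ because $\delta$ and $r_1$ are the fixed constants produced by Claim~\ref{claim1} and $K$ depends only on $r_1$. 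The remaining work is pure bookkeeping: the degenerate case $g\equiv0$ is trivial, and $J=0$ forces $g'\equiv0$ on an annulus hence $g\equiv0$, so one may assume $J>0$; and one should note that $\int_{\D}\lvert g'\rvert^2\log\frac{1}{\lvert w\rvert}\,dA<\infty$ is exactly \eqref{L-P}. I do not expect any real obstacle here — the one point requiring a little care is the monotonicity of $M(\rho)$ and the finiteness/positivity of the two elementary integrals defining $K$.
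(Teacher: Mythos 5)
Your proposal is correct, and its skeleton is the same as the paper's: for $g\in H_0^2$ use the change-of-variables formula \eqref{change_var} (the constant term vanishing because $\widetilde{\phi}_k(0)=0$ and $g(0)=0$), split the area integral at $\lvert w\rvert=r_1$, apply Claim~\ref{claim1} on the annulus and Littlewood's inequality \eqref{eq: littlewood ineq} on $D(0,r_1)$, and finish with the Littlewood--Paley identity \eqref{L-P}. The one place you diverge is the key step controlling the inner-disc integral by the full (or annular) integral: the paper cites Shapiro's Lemma~\ref{shlemma} (\cite[Lemma 2.7]{Sh2000}) applied to the increasing circular means $h(r)$ with the measure $d\mu(t)=2t\log\frac{1}{t}\,dt$, obtaining a constant $\gamma\in(0,1)$ and the bound $\nu=\sqrt{\gamma(1-\delta)+\delta}$, whereas you prove exactly the special case needed by hand, using the monotonicity of $M(\rho)=\sum_n\lvert b_n\rvert^2\rho^{2n}$ to get $I\leq K J$ with the explicit constant $K=\bigl(\int_0^{r_1}\rho\log\frac{1}{\rho}\,d\rho\bigr)/\bigl(\int_{r_1}^{1}\rho\log\frac{1}{\rho}\,d\rho\bigr)$, and conclude with $\nu=\sqrt{1-\tfrac{1-\delta}{1+K}}$. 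The two routes are in fact identical in content: your inequality $I\leq KJ$ gives $I\leq\frac{K}{1+K}(I+J)$, i.e.\ the lemma with $\gamma=\frac{K}{1+K}$, and then $\gamma(1-\delta)+\delta=1-\frac{1-\delta}{1+K}$, so even the final constants agree. What your version buys is self-containedness (no appeal to \cite{Sh2000} for the lemma) together with an explicit constant depending only on $r_1$; what the paper's version buys is brevity and a statement valid for arbitrary measures with $1$ in their closed support. Your handling of the degenerate cases ($g\equiv 0$, or $J=0$ forcing $g'\equiv 0$) and of uniformity in $k$ is also fine, so there is no gap.
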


For the proof of Claim~\ref{claim2}, we require the following simple estimate from measure theory (for a proof, see \cite[Lemma 2.7]{Sh2000}).

\begin{lemma}\label{shlemma}
Suppose $\mu$ is a positive, finite Borel measure on $[0,1)$ that has the point 1 is in its closed support. For every $r\in(0,1)$, there exists a constant $\gamma\in(0,1)$, depending only on $r$ and $\mu$, such that for every non-negative, increasing function $g$ on $[0,1)$, we have
\[
\int_{[0,r)}g d \mu\leq \gamma \int_{[0,1)} g d \mu.
\]
\end{lemma}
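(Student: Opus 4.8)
The plan is to reduce the claimed inequality to a single comparison between the integrals of $g$ over $[0,r)$ and over $[r,1)$, and then to close the argument with elementary algebra. Put $a\vcentcolon=\mu([0,r))$ and $b\vcentcolon=\mu([r,1))$; both are finite since $\mu$ is a finite measure. The role of the hypothesis that $1$ lies in the closed support of $\mu$ is precisely to guarantee $b>0$: every interval $(1-\epsilon,1)$ has positive $\mu$-measure, so taking $\epsilon=1-r$ gives $b\geq\mu((r,1))>0$. If $a=0$, or if $\int_{[0,1)}g\,d\mu=+\infty$, the conclusion holds trivially for any $\gamma\in(0,1)$, so I would henceforth assume $0<a<\infty$ and $\int_{[0,1)}g\,d\mu<\infty$.

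The key step uses only that $g$ is non-negative and increasing. For every $x\in[0,r)$ one has $g(x)\leq g(r)$, and for every $y\in[r,1)$ one has $g(y)\geq g(r)$. Integrating these two pointwise bounds against $\mu$ yields
\[
\int_{[0,r)}g\,d\mu\leq g(r)\,a\qquad\text{and}\qquad \int_{[r,1)}g\,d\mu\geq g(r)\,b,
\]
and since $b>0$ these combine to $\int_{[0,r)}g\,d\mu\leq K\int_{[r,1)}g\,d\mu$, where $K\vcentcolon=a/b\in(0,\infty)$ is a constant depending only on $r$ and $\mu$.

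To finish, I would write $\int_{[0,1)}g\,d\mu=\int_{[0,r)}g\,d\mu+\int_{[r,1)}g\,d\mu$ and substitute the bound $\int_{[r,1)}g\,d\mu\geq K^{-1}\int_{[0,r)}g\,d\mu$, obtaining $\int_{[0,1)}g\,d\mu\geq(1+K^{-1})\int_{[0,r)}g\,d\mu$, that is
\[
\int_{[0,r)}g\,d\mu\leq\frac{K}{1+K}\int_{[0,1)}g\,d\mu,
\]
so $\gamma\vcentcolon=K/(1+K)\in(0,1)$ works, and it depends only on $r$ and $\mu$. I do not expect a genuine obstacle here: the entire content of the lemma is the monotonicity bound in the second paragraph, and the only points requiring care are the bookkeeping with half-open intervals, the degenerate cases $a=0$ and $\int_{[0,1)}g\,d\mu=+\infty$, and — crucially — the use of the support hypothesis to ensure $b>0$, which is exactly what upgrades the conclusion from $\gamma\leq 1$ to $\gamma<1$.
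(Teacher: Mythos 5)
Your argument is correct: the support hypothesis gives $\mu([r,1))>0$, and the monotonicity bounds $g\leq g(r)$ on $[0,r)$ and $g\geq g(r)$ on $[r,1)$ yield $\int_{[0,r)}g\,d\mu\leq\frac{a}{b}\int_{[r,1)}g\,d\mu$, from which $\gamma=\frac{a}{a+b}<1$ follows, with the degenerate cases handled as you indicate. Note that the paper itself gives no proof of this lemma --- it is quoted directly from Shapiro \cite{Sh2000}*{Lemma 2.7} --- and your elementary argument is essentially the standard one given there, so there is nothing further to reconcile.
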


\begin{proof}[Proof of Claim~\ref{claim2}]
Suppose $g\in H_0^2$. By the change of variables formula \eqref{change_var}, we have that
\[
\lVert C_{\widetilde{\phi}_k}(g) \rVert_2^2 = 2\int_{\D} \lvert g'(w) \rvert N_{\widetilde{\phi}_k}(w)d A(w).
\]
From Claim~\ref{claim1}, there exist constants $r_1,\delta\in(0,1)$ so that for all $w\in \D$, with $\lvert w \rvert\in[r_1,1)$, we have $N_{\widetilde{\phi}_k}(w)\leq \delta\log\frac{1}{\lvert w \rvert}$. Also, Littlewood's Inequality \eqref{eq: littlewood ineq} applied to $\widetilde{\phi}_k$ implies that $N_{\widetilde{\phi}_k}(w)\leq \log\frac{1}{\lvert w\rvert}$, for all $w\in\D\setminus\{\widetilde{\phi}_k(0)\}$. So,
\begin{align}
\lVert C_{\widetilde{\phi}_k}(g) \rVert_2^2 &= 2 \int_{\lvert w \rvert< r_1}\lvert g'(w)\rvert^2 N_{\widetilde{\phi}_k}(w)dA(w) + 2\int_{\lvert w \rvert\geq r_1}\lvert g'(w)\rvert^2 N_{\widetilde{\phi}_k}(w)dA(w)\nonumber \\
	&\leq 2 \int_{\lvert w \rvert< r_1}\lvert g'(w)\rvert^2 \log\frac{1}{\lvert w \rvert} dA(w) + 2\delta \int_{\lvert w \rvert\geq r_1}\lvert g'(w)\rvert^2 \log\frac{1}{\lvert w \rvert}dA(w)\nonumber\\
	&=2(1-\delta) \int_{\lvert w \rvert< r_1}\lvert g'(w)\rvert^2 \log\frac{1}{\lvert w \rvert} dA(w) + 2\delta \int_{\D}\lvert g'(w)\rvert^2 \log\frac{1}{\lvert w \rvert}dA(w).\label{int_break}
\end{align}
Now, define $h(r)=\frac{1}{2\pi}\int_0^{2\pi}\lvert g'(re^{i\theta})\rvert^2 d\theta$, and note that it is an increasing function of $r\in[0,1)$. Moreover, if we consider the Borel measure $d\mu(t)=2t\log\frac{1}{t}dt$, then Lemma~\ref{shlemma} yields a constant $\gamma\in(0,1)$, depending only on $r_1$, so that
\[
\int_{[0,r_1)}h(r)d\mu(r)=\int_{\lvert w \rvert<r_1}\lvert g'(w) \rvert^2\log\frac{1}{\lvert w \rvert}d A(w)\leq \gamma \int_{\D} \lvert g'(w) \rvert^2\log\frac{1}{\lvert w \rvert}d A(w).
\]
Therefore, \eqref{int_break} implies that
\begin{equation}\label{eq: norm lemma eq1}
\lVert C_{\widetilde{\phi}_k}(g) \rVert_2^2 \leq (\gamma(1-\delta)+\delta) 2\int_{\D} \lvert g'(w) \rvert^2\log\frac{1}{\lvert w \rvert}d A(w).
\end{equation}
By the Littlewood--Paley Identity \eqref{L-P}, the inequality \eqref{eq: norm lemma eq1} can be rewritten as 
\[
\lVert C_{\widetilde{\phi}_k}(g) \rVert_2^2 \leq (\gamma(1-\delta)+\delta)\lVert g \rVert_2^2.
\]
Since $g\in H_0^2$ was arbitrary, we have that $\lVert C_{\widetilde{\phi}_k}\arrowvert_{H_0^2} \rVert\leq \sqrt{\gamma(1-\delta)+\delta} <1$, as required.
\end{proof}

Now that the groundwork is laid, we can complete the proof of our main result.

\begin{proof}[Proof of Theorem~\ref{mainprop}]
Since, as mentioned in the beginning of this section, we assume that $z_0=0$, our goal is to show that the sequence $(F_m(\zeta))$ converges to 0, as $m\to+\infty$, for Lebesgue almost all $\zeta\in\T$.

Recall that from our normalisation \eqref{Fm}, the left iterated function system $F_m$ can be rewritten as
\[
F_m= f_m\circ\cdots \circ f_{n_{k+1}}\circ e_k\circ \widetilde{\phi}_k\circ \widetilde{\phi}_{k-1}\circ \cdots\circ \widetilde{\phi}_1\circ f_{n_1-1}\circ\cdots \circ f_1,
\]
where $k$ is the largest integer so that $m>n_{k+1}$. Therefore, 
\[
F_{n_{k+1}-1}= e_k\circ \widetilde{\phi}_k\circ \widetilde{\phi}_{k-1}\circ \cdots\circ \widetilde{\phi}_1\circ f_{n_1-1}\circ\cdots \circ f_1.
\]
For simplicity, let us also write $f=f_{n_1-1}\circ\cdots \circ f_1$, so that
\[
e_k\circ F_{n_{k+1}-1}= \widetilde{\phi}_k\circ \widetilde{\phi}_{k-1}\circ \cdots\circ \widetilde{\phi}_1\circ f,
\]
recalling that the automorphisms $e_k$ are self-inverse. 

Our first goal is to show that $(e_k\circ F_{n_{k+1}-1}(\zeta))$ converges to 0 for Lebesgue almost all $\zeta\in\T$.

Using Claim~\ref{claim2}, we can find $\nu\in(0,1)$ so that $\lVert C_{\widetilde{\phi}_k}\arrowvert_{H_0^2}\rVert\leq \nu$, for all $k\in\N$. Therefore, for the $H^2$ norm of $e_k\circ F_{n_{k+1}-1}$ we get 
\begin{align}
\lVert e_k\circ F_{n_{k+1}-1} \rVert_2 &= \lVert C_{e_k\circ F_{n_{k+1}-1}}(\mathrm{id})\rVert_2 = \lVert C_f\circ C_{\widetilde{\phi}_1}\circ C_{\widetilde{\phi}_2}\circ \cdots \circ C_{\widetilde{\phi}_k}(\mathrm{id})\rVert_2\nonumber\\
	&\leq \lVert C_f \rVert \cdot \lVert C_{\widetilde{\phi}_1}\circ C_{\widetilde{\phi}_2}\circ \cdots \circ C_{\widetilde{\phi}_k}(\mathrm{id})\rVert_2\nonumber\\
	&\leq \lVert C_f \rVert \cdot \prod_{i=1}^{k}\lVert C_{\widetilde{\phi}_i}\arrowvert_{H_0^2}\rVert\nonumber\\
	&\leq\lVert C_f \rVert\cdot \nu^k.\label{eq: gmFm norm}
\end{align}

Using the Monotone Convergence Theorem, the boundary integral form of the $H^2$ norm in \eqref{eq: hardy norm integral form}, and \eqref{eq: gmFm norm} we obtain
\begin{align*}
\int_{\T} \sum_{k=1}^{+\infty} \lvert  e_k\circ F_{n_{k+1}-1}(\zeta)\rvert^2 d\ell(\zeta) &= \sum_{k=1}^{+\infty} \int_{\T} \lvert  e_k\circ F_{n_{k+1}-1}(\zeta)\rvert^2 d\ell(\zeta) \\
&=\sum_{k=1}^{+\infty} \lVert  e_k\circ F_{n_{k+1}-1} \rVert^2 \leq \lVert C_f \rVert^2\cdot \sum_{k=1}^{+\infty} \nu^{2k}<+\infty.
\end{align*}

This means that $\sum\limits_{k=1}^{+\infty} \lvert e_k\circ F_{n_{k+1}-1}(\zeta)\rvert^2<+\infty$, for almost every $\zeta\in\T$, and so 
\begin{equation}\label{eq: final convergence}
\lvert e_k\circ F_{n_{k+1}-1}(\zeta) \rvert\xrightarrow{k\to+\infty} 0, \quad \text{for almost every}\  \zeta\in\T,
\end{equation} 
as desired.

We finally move on to the general case where
\begin{align*}
F_m&= f_m\circ\cdots \circ f_{n_{k+1}}\circ e_k\circ \widetilde{\phi}_k\circ \widetilde{\phi}_{k-1}\circ \cdots\circ \widetilde{\phi}_1\circ f_{n_1-1}\circ\cdots \circ f_1\\
&=f_m\circ\cdots \circ f_{n_{k+1}}\circ e_k\circ F_{n_{k+1}-1},
\end{align*}
and $k$ is the largest integer so that $m>n_{k+1}$. In particular, $k\to+\infty$ whenever $m\to+\infty$.

Note that \eqref{eq: final convergence} implies that $e_k\circ F_{n_{k+1}-1}(\zeta)\in\D$, for all $k$ large, and so $F_m(\zeta)$ also lies in $\D$ for all $m$ large.

By the triangle inequality for the hyperbolic metric and the Schwarz--Pick Lemma \eqref{eq: schwarz-pick}, we have
\begin{align}
d(f_m\circ\cdots\circ f_{n_{k+1}}(0),0)&\leq d(f_m\circ\cdots\circ f_{n_{k+1}} (0),F_m(0))+ d(F_m(0),0)\nonumber\\
		&\leq d(0,F_{n_{k+1}-1}(0)) + d(F_m(0),0).\label{eq: tail}
\end{align}
Now, since by assumption $F_m$ converges locally uniformly to the constant 0, we have that the sequences $(F_{n_{k+1}-1}(0)$ and $(F_m(0))$ both converge to 0 as $m$---and so also as $k$---tends to $+\infty$. So, taking limits $m\to+\infty$ in \eqref{eq: tail} and applying these facts yields
\begin{equation}\label{eq: tail converges}
f_m\circ\cdots\circ f_{n_{k+1}}(0)\xrightarrow{m\to+\infty}0.
\end{equation}
To conclude the proof it suffices to note that for almost all $\zeta\in\D$ we have
\begin{align*}
d(F_m(\zeta),f_m\circ\cdots\circ f_{n_{k+1}}(0))\leq d(e_k\circ F_{n_{k+1}-1}(\zeta),0),
\end{align*}
and apply \eqref{eq: final convergence} and \eqref{eq: tail converges}.
\end{proof}

\section*{Acknowledgements}
I would like to thank Professors M. D. Contreras and S. D\'iaz-Madrigal for the discussions that inspired this result, and the anonymous referee for their insightful comments and recommendations.

\begin{bibdiv}
\begin{biblist}

\bib{Ab2023}{book}{
   author={Abate, Marco},
   title={Holomorphic dynamics on hyperbolic Riemann surfaces},
   series={De Gruyter Studies in Mathematics},
   volume={89},
   publisher={De Gruyter, Berlin},
   date={2023},
   pages={xiii+356}
}

\bib{AbCh2022}{article}{
   author={Abate, Marco},
   author={Christodoulou, Argyrios},
   title={Random iteration on hyperbolic Riemann surfaces},
   journal={Ann. Mat. Pura Appl. (4)},
   volume={201},
   date={2022},
   number={4},
   pages={2021--2035}
}

\bib{AbSh2025}{article}{
	author={Abate, Marco},
   author={Short, Ian},
   title={Iterated function systems of holomorphic maps},
   journal={Adv. Math.},
   volume={490},
   date={2026},
   pages={Paper No. 110818}
}

\bib{BEFRS1}{article}{
   author={Benini, Anna Miriam},
   author={Evdoridou, Vasiliki},
   author={Fagella, N\'uria},
   author={Rippon, Philip J.},
   author={Stallard, Gwyneth M.},
   title={Classifying simply connected wandering domains},
   journal={Math. Ann.},
   volume={383},
   date={2022},
   number={3-4},
   pages={1127--1178}
}

\bib{BEFRS2}{article}{
   author={Benini, Anna Miriam},
   author={Evdoridou, Vasiliki},
   author={Fagella, N\'uria},
   author={Rippon, Philip J.},
   author={Stallard, Gwyneth M.},
   title={Boundary dynamics for holomorphic sequences, non-autonomous
   dynamical systems and wandering domains},
   journal={Adv. Math.},
   volume={446},
   date={2024},
   pages={Paper No. 109673, 51}
}

\bib{BoMaSh2005}{article}{
   author={Bourdon, P. S.},
   author={Matache, V.},
   author={Shapiro, J. H.},
   title={On convergence to the Denjoy-Wolff point},
   journal={Illinois J. Math.},
   volume={49},
   date={2005},
   number={2},
   pages={405--430}
}

\bib{ChSh2019}{article}{
   author={Christodoulou, Argyrios},
   author={Short, Ian},
   title={Stability of the Denjoy-Wolff theorem},
   journal={Ann. Fenn. Math.},
   volume={46},
   date={2021},
   number={1},
   pages={421--431}
}

\bib{DoMa1991}{article}{
	author={C. I. Doering},
	author={R. Ma\~ne},
	title={The dynamics of inner functions},
	journal={Ens. Mat. Soc. Bras. Mat.},
	volume={3},
	date={1991},
	pages={1--79}
}

\bib{Du1970}{book}{
   author={Duren, Peter L.},
   title={Theory of $H\sp{p}$ spaces},
   series={Pure and Applied Mathematics},
   volume={Vol. 38},
   publisher={Academic Press, New York-London},
   date={1970}
}

\bib{Fe2023}{article}{
   author={Ferreira, Gustavo R.},
   title={A note on forward iteration of inner functions},
   journal={Bull. Lond. Math. Soc.},
   volume={55},
   date={2023},
   number={3},
   pages={1143--1153}
}

\bib{FeNi2025}{article}{
   author={Ferreira, Gustavo R.},
   author={Nicolau, Artur},
   title={Mixing and ergodicity of compositions of inner functions},
   journal={Discrete Contin. Dyn. Syst.},
   volume={45},
   date={2025},
   number={7},
   pages={2066--2080}
}

\bib{Po2010}{article}{
   author={Poggi-Corradini, Pietro},
   title={Pointwise convergence on the boundary in the Denjoy-Wolff theorem},
   journal={Rocky Mountain J. Math.},
   volume={40},
   date={2010},
   number={4},
   pages={1275--1288}
}

\bib{Po1992}{book}{
   author={Pommerenke, Ch.},
   title={Boundary behaviour of conformal maps},
   series={Grundlehren der mathematischen Wissenschaften [Fundamental
   Principles of Mathematical Sciences]},
   volume={299},
   publisher={Springer-Verlag, Berlin},
   date={1992},
   pages={x+300}
}

\bib{Sh1993}{book}{
   author={Shapiro, Joel H.},
   title={Composition operators and classical function theory},
   series={Universitext: Tracts in Mathematics},
   publisher={Springer-Verlag, New York},
   date={1993},
   pages={xvi+223}
}

\bib{Sh2000}{article}{
   author={Shapiro, Joel H.},
   title={What do composition operators know about inner functions?},
   journal={Monatsh. Math.},
   volume={130},
   date={2000},
   number={1},
   pages={57--70}
}

\bib{schiff}{book}{
   author={Schiff, Joel L.},
   title={Normal families},
   series={Universitext},
   publisher={Springer-Verlag, New York},
   date={1993},
   pages={xii+236}
}

\end{biblist}
\end{bibdiv}

\end{document}